\documentclass[twocolumn]{autart} 

\usepackage{cite}
\usepackage{amsmath,amssymb,amsfonts}
\usepackage{algorithmic}
\usepackage{textcomp}
\usepackage{color}
\usepackage{bm}
\usepackage{graphicx}
\usepackage{enumerate}
\usepackage{multicol}
\usepackage{booktabs,tabularx,makecell,array}

\newcolumntype{L}{>{\raggedright\arraybackslash}X}

\newtheorem{lemma}{Lemma}
\newtheorem{assumption}{Assumption}

\newtheorem{remark}{Remark}

\newtheorem{theorem}{Theorem}

\begin{document}

\begin{frontmatter}
%\runtitle{Insert a suggested running title}  % Running title for regular 
% papers but only if the title  
% is over 5 words. Running title 
% is not shown in output.

\title{Primal-dual algorithm for distributed optimization: A dissipativity-based perspective\thanksref{footnoteinfo}} % Title, preferably not more 
% than 10 words.

\thanks[footnoteinfo]{This paper was not presented at any IFAC 
meeting. \\
$*$ Corresponding author. %Tel. +XXXIX-VI. 
%Fax +XXXIX-VI.
}

\author{Weijian Li},
\ead{wli26@nd.edu}
\author{Panos J. Antsaklis},
\ead{pantsak1@nd.edu}
\author{Hai Lin$^*$}
\ead{hlin1@nd.edu}

%\author{Weijian Li, Panos J. Antsaklis, Hai Lin}\ead{wli26@nd.edu}(W. Li)
%\ead{pantsakl@nd.edu}
%\ead{hlin1@nd.edu}   % Add the e-mail address 

\address{Department of Electrical Engineering, University of Notre Dame, Notre Dame, IN 46556, USA}  % Please supply                                                     % full addresses
%\address[Baiae]{The White House, Baiae}        % here.

\begin{keyword}                           % Five to ten keywords,  
Distributed optimization,
primal-dual algorithm,
nonconvexity,
communication topology,           % chosen from the IFAC 
dissipativity
\end{keyword}                             % keyword list or with the 
% help of the Automatica 
% keyword wizard

%%%%%%
\begin{abstract}
We study a continuous-time primal-dual algorithm for distributed optimization with nonconvex local cost functions over weight-unbalanced digraphs, and analyze its performance from a dissipativity-based perspective.
We first reformulate the algorithm as a Lur'e type system, consisting of a linear subsystem that relies on the communication topology and the algorithm gains, and a static nonlinear gradient feedback. We then show that the linear subsystem is dissipative with respect to a suitable supply rate, while the nonlinear feedback is not passive. Finally, we establish that, by properly selecting the gains or appropriately designing the communication network, this algorithm converges to an equilibrium at an exponential rate, and thus, achieves an optimal solution to the distributed problem.
This work provides new insights into the roles of the network topology, algorithm gains, and cost functions in the performance of a distributed algorithm, and complements existing results from a different viewpoint.
\end{abstract}

\end{frontmatter}

%%%%
%%%%-----------------
\section{Introduction}

Recent years have witnessed a flurry of research interest in
distributed optimization, motivated by its applications in diverse areas, including distributed estimation, sensor networks, power systems, and machine learning \cite{nedic2018distributed, yang2019survey,yuan2024multi}. The basic idea is that, in a multi-agent system, all agents cooperate to minimize a sum of local costs with their local information and neighbors’ states. A variety of distributed algorithms have been proposed to ensure all agents reach consensus, and meanwhile, achieve optimality. To mention just a few, there are distributed subgradient algorithm \cite{nedic2009distributed},  alternating
direction method of multipliers (ADMM) \cite{neal2011distributed}, distributed dual averaging algorithm \cite{duchi2011dual}, and zeroth-order methods \cite{sahu2020decentralized}.
Among them, first-order primal-dual methods have been widely employed, as they are powerful for constrained optimization problems, and easy to implement in distributed manners \cite{arrow1958studies, wang2010control, gharesifard2013distributed}.
For instance, the convergence of a distributed primal-dual algorithm was studied in \cite{gharesifard2013distributed} over weight-balanced graphs.
Subsequently, the primal-dual framework was extended to constrained optimization with nonsmooth cost functions
in \cite{zeng2016distributed}, and its discrete-time counterpart was explored in \cite{lei2016primal}.
In \cite{yi2020distributed}, the algorithm was further generalized to solve distributed online convex optimization with coupled constraints.
Exponential convergence of primal–dual methods was established in \cite{liang2019exponential} without strong convexity. In \cite{li2017distributed, li2020projection}, distributed adaptive primal-dual algorithms were proposed for a convex sum of nonconvex cost functions.
The convergence of primal–dual algorithms is well understood under convex local cost functions, but remains unclear in the absence of convexity.

Indeed, distributed algorithms, including distributed primal-dual methods, can be viewed as dynamical systems. 
Hence, Lyapunov approaches have been broadly employed to establish their convergence, thanks to the well-developed stability theory \cite{zeng2016distributed, gharesifard2013distributed, kia2015distributed}.
However, they often relies on complicated Lyapunov function candidates, which are closely tied to the underlying problem and algorithm structures \cite{khalil2002nonlinear}, and moreover, the derived results may be conservative.
More recently, contraction theory has been introduced as an alternative tool to analyze optimization algorithms.
In particular, strong contractivity has been established for distributed convex optimization algorithms, providing global exponential convergence guarantees and robustness to disturbances \cite{davydov2025time, gokhale2023contractivity}.
However, both Lyapunov-based approaches and contraction theory typically treat an algorithm as an integrated dynamical system, and provide limited insight into the interaction among its individual components.
The performance of a distributed algorithm is jointly influenced by the communication topology, cost functions and algorithm parameters. This interdependence highlights the importance of developing a systematic framework that captures the distributed interconnection structure, and clarifies the roles of these components in shaping the properties of the algorithm.

Dissipativity is a powerful tool to address the stability of interconnected dynamical systems, as well as to design controllers with performance guarantees in both centralized and decentralized scenarios \cite{sepulchre2012constructive, khalil2002nonlinear,zakeri2022passivity, welikala2025decentralized}. 
By decomposing optimization algorithms into interconnected dynamical systems, dissipativity-based approaches have been explored to analyze their performance \cite{lessard2022analysis}.
This viewpoint provides a deeper understanding of existing algorithms by explaining why they work under certain settings, and further guides the development of novel methods.
For instance, a unifying passivity-based framework was provided in \cite{wen2004unifying} for network flow problems, and a generalization of the primal-dual algorithm was proposed in \cite{yamashita2020passivity} for non-strictly convex cost functions.
In \cite{hatanaka2018passivity}, a passivity-based distributed algorithm was designed to handle communication delays, and subsequently, an input-feedforward-passivity-based distributed algorithm was explored over jointly connected
balanced digraphs in \cite{li2020input}.
In \cite{li2025passivity}, a class of generalized gradient-play dynamics was developed based on passivity.
However, most existing works design distributed algorithms under convex local cost functions and given communication topologies. 

Inspired by the above observations, we study a primal-dual algorithm for distributed optimization with nonconvex local cost functions from a dissipativity-based perspective.
Our contributions are at least twofold.

\begin{itemize}
\item Unlike the primal–dual algorithms studied in \cite{wang2010control,gharesifard2013distributed, zeng2016distributed, lei2016primal},
we only suppose that the global cost function is convex, while each local cost may not be \cite{pang2022gradient, li2017distributed, li2020projection}. Moreover, the communication topology is allowed to be weight-unbalanced \cite{li2020cooperative, cheng2022distributed}.
We investigate the performance of this algorithm from a novel 
dissipativity-based perspective. In particular, we characterize how the communication network, the algorithm gains, and the local costs jointly influence its convergence.

\item We decompose the algorithm into a Lur'e system, in which the linear subsystem depends on the network structures and the algorithm gains, and the nonlinear feedback arises from gradients of the local costs.
Such a decomposition differs from those in \cite{li2020input,hatanaka2018passivity, wang2010control}.
We find that the linear subsystem is dissipative with a supply rate, whereas the nonlinear subsystem is not passive due to the nonconvexity of the local costs.
This is an alternative result to \cite{gharesifard2013distributed, zeng2016distributed, wang2010control, kia2015distributed}, in that the dissipativity properties are explored and exploited.
Furthermore, for given local costs,  we show that the algorithm solves the distributed optimization problem by properly selecting algorithm gains or by appropriately designing the communication graphs. Thus, the applicability of primal-dual methods is significantly extended.
\end{itemize}

This paper is organized as follows. Section 2 introduces the necessary preliminaries. 
Section 3 formulates the problem and presents the distributed primal-dual algorithm.
The main results are provided in Section 4.
Numerical simulations are carried out in Section 5.
Finally, concluding remarks are given in Section 6.

%%%%
%%%%-----------------
\section{Preliminaries}

In this section, we introduce the notation and review the necessary concepts from convex optimization, graph theory, and dissipativity.

\emph{Notation}: Let $\mathbb R$ be the set of real numbers, and $\mathbb R^m$ be the set of $m$-dimensional real column vectors. Let $\bm{1}_m (\bm{0}_m)$ be the $m$-dimensional vector with all entries of 1 (0), $\bm 0_{m \times n}$ be the $m$-by-$n$ zero matrix, and $I_n$ be the $n$-by-$n$ identity matrix. We simply write $\bm{0}$ for zero vectors/matrices of appropriate dimensions when there is no confusion. 
Denote by $\mathbb S^n_{++}$ ($\mathbb S^n_+$) the set of $n$-by-$n$ positive (semi-) definite matrices.  
We use $X \!\succ \!(\succeq) \bm 0$ to denote $X \in \mathbb S^n_{++} (\mathbb S^n_+)$.
Let $(\cdot)^\top$, $\otimes$, and $\Vert \cdot \Vert$ be the transpose, the Kronecker product, and the Euclidean norm. The Euclidean inner product of $x$ and $y$ is
$x^\top y$ or $\langle x, y \rangle$.
Denote by ${\rm diag}\{x\}$ the diagonal matrix whose diagonal elements are the entries of the vector $x$.

\emph{Convex Analysis}: Let $f: \mathbb{R}^m \to \mathbb{R}$ be a differentiable function, and $\nabla f(x)$ denote its gradient at $x$.
Then $f$ is said to be convex if $f(y) \ge f(x) + \langle y - x, \nabla f(x)\rangle,~\forall x, y \in \mathbb R^m$, and $\mu$-strongly convex if there exists $\mu>0$ such that $\langle y - x, \nabla f(y) - \nabla f(x)\rangle \ge \mu \Vert x - y\Vert^2, ~\forall x, y \in \mathbb R^m$.
The mapping $\nabla f(x)$ is $l$-Lipschitz continuous if
$\Vert \nabla f(x) - \nabla f(y)\Vert \le l \Vert x - y \Vert, \forall x, y \in \mathbb R^m$.

\emph{Graph Theory}: Consider a multi-agent network modeled by a weighted directed graph (digraph) $\mathcal G(\mathcal V, \mathcal E)$, where $\mathcal V = \{1, \dots, N\}$ is the node set, $\mathcal E \subset \mathcal V \times \mathcal V$ is the edge set. 
An edge $(j, i) \in \mathcal E$ indicates that node $i$ can receive the information from node $j$, but not vice versa.
The adjacency matrix $\mathcal A = [a_{ij}] \in \mathbb R^{N \times N}$, associated with $\mathcal G$, is defined as $a_{ij} > 0$ if $(j, i) \in \mathcal E$, and $a_{ij} = 0$ otherwise.
Suppose that there are no self-loops in $\mathcal G$, i.e., $a_{ii} = 0$.
A path is a sequence of nodes connected by edges.
A digraph is strongly connected if there is
a path between any pair of distinct nodes.
The Laplacian matrix $L_G$ is $L_G = \mathcal D - \mathcal A$, where $\mathcal D = {\rm diag}\{d_1, \dots, d_N\} \in \mathbb R^{N \times N}$, and $d_i = \sum_{j \in \mathcal V} a_{ij}$.
A digraph $\mathcal G$ is weight-balanced iff
$\bm{1}_N^\top L_G = \bm{0}_N^\top$.
%, and $\mathcal G$ is undirected if $(j, i) \in \mathcal E$ anytime $(i, j) \in \mathcal E$.
The following lemma is derived from \cite{li2017cooperative}.
\begin{lemma}
\label{lem:graph}
Let $\mathcal G$ be a strongly connected digraph with Laplacian matrix $L_G$. The following statements hold.
\begin{enumerate}[i)]
\item $L_G$ has a simple zero eigenvalue corresponding to the right eigenvector $\bm{1}_N$, and all its nonzero eigenvalues have positive real part.

\item There is a positive left eigenvector $r \in \mathbb R^N$ associated with the zero eigenvalue of $L_G$ such that $r^\top L_G = \bm{0}_N^\top$ and $\bm 1_N^\top r= 1$.

\item Define $\tilde L = RL_G + L_G^\top R$, where $R = {\rm diag}\{r\}$. Let $\rho(L_G)$ be the generalized algebraic connectivity of $\mathcal G$ as
$\rho(L_G) \!=\! \min_{r^\top x = 0, x \not= \bm{0}} \frac {x^\top \tilde L x}{2 x^\top R x}$.
Then $\tilde L$ is positive semidefinite, and $\rho(L_G) > 0$.
\end{enumerate}
\end{lemma}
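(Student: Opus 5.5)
We establish the three statements in order, with i) and ii) resting on Perron--Frobenius theory and iii) on a quadratic-form identity together with a compactness argument.

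\emph{Statements i) and ii).} By construction $L_G\bm 1_N = \mathcal D\bm 1_N - \mathcal A\bm 1_N = \bm 0_N$, so $\bm 1_N$ is a right eigenvector for the eigenvalue zero. Gershgorin's theorem puts every eigenvalue in $\bigcup_i\{z:|z-d_i|\le d_i\}$. Every such disc lies in the closed right half-plane and touches the imaginary axis only at the origin, so all nonzero eigenvalues have positive real part.

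For simplicity of the zero eigenvalue and for the positive left eigenvector, set $s>\max_i d_i$ and consider $P = sI_N - L_G$. This matrix is nonnegative, and it is irreducible because $\mathcal G$ is strongly connected. Since $P\bm 1_N = s\bm 1_N$ with $\bm 1_N>0$, the Perron--Frobenius theorem gives that $s$ is the spectral radius of $P$ and that it is a simple eigenvalue. It also gives a positive left eigenvector, which we normalize so that $\bm 1_N^\top r = 1$. Translating back through $L_G = sI_N - P$ yields i) and ii).

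\emph{Statement iii), positive semidefiniteness.} We compute the quadratic form directly:
\begin{equation*}
x^\top R L_G x = \sum_i r_i x_i\sum_j a_{ij}(x_i-x_j) = \sum_{i,j} r_i a_{ij}(x_i^2 - x_ix_j).
\end{equation*}
Now use $x_i^2 - x_ix_j = \tfrac12(x_i-x_j)^2 + \tfrac12(x_i^2-x_j^2)$. The second part contributes $\tfrac12\sum_i x_i^2\bigl(r_i d_i - \sum_j r_j a_{ji}\bigr)$, which vanishes because $r^\top L_G = \bm 0^\top$ means exactly $r_i d_i = \sum_j r_j a_{ji}$ for every $i$. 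We are left with
\begin{equation*}
x^\top \tilde L x = 2x^\top R L_G x = \sum_{i,j} r_i a_{ij}(x_i-x_j)^2 \ge 0.
\end{equation*}
In other words, $\tilde L$ is the Laplacian of the undirected graph with weights $r_ia_{ij}+r_ja_{ji}$.

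\emph{Statement iii), $\rho(L_G)>0$.} The key observation is that the sum of squares above vanishes iff $x_i = x_j$ on every edge. Strong connectivity (indeed, connectivity of the underlying undirected graph suffices) then forces $x\in\mathrm{span}\{\bm 1_N\}$, so $\ker\tilde L = \mathrm{span}\{\bm 1_N\}$. If such an $x$ also satisfies $r^\top x = 0$, then $x = c\bm 1_N$ and $0 = r^\top x = c\,\bm 1_N^\top r = c$, so $x=\bm 0$. Hence the Rayleigh-type quotient in the definition of $\rho(L_G)$ is strictly positive on the feasible set.

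The quotient is homogeneous of degree zero, so we may restrict to the compact set $\{x : r^\top x = 0,\ x^\top R x = 1\}$; its denominator is positive there because $R\succ \bm 0$. A continuous, strictly positive function attains a positive minimum on a compact set, so $\rho(L_G)>0$. (Equivalently, $\rho(L_G)$ is the smallest generalized eigenvalue of the pencil $(\tilde L, 2R)$ on the subspace $r^\top x = 0$.)

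The step that requires the most care is the cancellation $r_i d_i = \sum_j r_j a_{ji}$. It is what makes $\tilde L$ a genuine symmetric Laplacian even though $\mathcal G$ is weight-unbalanced.
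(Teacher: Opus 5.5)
Your proof is correct and complete. The paper gives no proof of this lemma; it only cites Li and Duan's monograph. Your argument is therefore a self-contained substitute rather than an alternative to an existing proof, and it follows the standard route:
\begin{itemize}
\item Perron--Frobenius applied to the irreducible nonnegative matrix $sI_N - L_G$ gives i) and ii), with Gershgorin handling the sign of the real parts.
\item The column-sum identity $r_i d_i = \sum_j r_j a_{ji}$ shows that $RL_G$ is the Laplacian of a weight-balanced digraph. Hence $\tilde L$ is the Laplacian of a connected undirected graph with $\ker \tilde L = \mathrm{span}\{\bm 1_N\}$.
\end{itemize}
You correctly identify that cancellation as the step that matters.

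Two small remarks. First, the feasible set in the definition of $\rho(L_G)$ is empty when $N=1$, so $N \ge 2$ is implicitly assumed.

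Second, your compactness step shows $\rho(L_G)>0$ but gives no estimate. You can get one at no extra cost from what you already proved, namely $\tilde L \bm 1_N = \bm 0$ and $\lambda_2(\tilde L) > 0$. Take $x$ with $r^\top x = 0$ and set $c = \bm 1_N^\top x / N$. Using $\bm 1_N^\top r = 1$, you have $\sum_i r_i (x_i - c)^2 = x^\top R x + c^2$. This gives $x^\top R x \le (\max_i r_i)\Vert x - c\bm 1_N\Vert^2$. Also $x^\top \tilde L x = (x-c\bm 1_N)^\top \tilde L (x - c\bm 1_N) \ge \lambda_2(\tilde L)\Vert x - c\bm 1_N\Vert^2$. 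Together these yield $\rho(L_G) \ge \lambda_2(\tilde L)/(2\max_i r_i)$.

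This explicit lower bound is more useful than bare positivity. Theorem 2 treats $\rho(L_G)\underline r$ as a design quantity that the communication graph must make large enough, and the bound lets you certify that for a given graph.
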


\emph{Dissipativity}:
Consider a dynamical system 
\begin{equation}
\label{dyn:nonlinear}
\dot x = f(x, u), ~y = h(x)
\end{equation}
where $x \in \mathbb R^n$, $u\in \mathbb R^p$, $y \in \mathbb R^m$, $f: \mathbb R^n \times \mathbb R^p \rightarrow \mathbb R^n$ and $h: \mathbb R^n \rightarrow \mathbb R^m$ are continuously differentiable,
$f(\bm 0, \bm 0) = \bm 0$ and $h(\bm 0) = \bm 0$.
The system is said to be dissipative with respect to a supply rate $s: \mathbb R^p \times \mathbb R^m \rightarrow \mathbb R$ if there exists a continuously differentiable storage function $V: \mathbb R^n \rightarrow \mathbb R$ such that $V(\bm 0) = 0$, $V(x) \ge 0$ and $\dot V = \nabla_x V \cdot f(x, u) \le s(u, y)$ for all $(x, u) \in \mathbb R^n \times \mathbb R^m$.
If $s(u, y) = u^\top y$, then (\ref{dyn:nonlinear}) is passive. See \cite{sepulchre2012constructive,khalil2002nonlinear,  arcak2022compositional} for further details on dissipativity.

%%%%
%%%%----------------- 
\section{Formulation and algorithm}

In this section, we formulate the distributed optimization problem, and introduce a primal-dual algorithm.

%%%%%
%%%%%----------------- 
%\subsection{Problem statement}

Consider a network of $N$ agents interacting over a digraph $\mathcal G(\mathcal V, \mathcal E)$, where $\mathcal V = \{1, \dots, N\}$ and $\mathcal E \subset \mathcal V \times \mathcal V$.
Each agent $i \in \mathcal V$ is associated with a
local cost function $f_i: \mathbb R^m \rightarrow \mathbb R$. All the agents cooperate to reach a consensus solution that minimizes the global cost function 
$ \tilde f(x) := \sum_{i \in \mathcal V} f_i(x)$, where $x \in \mathbb R^m$ is the decision variable. 
To be strict, the problem is formulated as
\begin{equation}
\label{form}
\min~ \tilde f(x) := \sum\nolimits_{i \in \mathcal V} f_i(x).
\end{equation}
We make the following standing assumptions.
\begin{assumption}
\label{ass:convex}
Each local cost function $f_i$ is differentiable with $l_i$-Lipschitz continuous gradient on $\mathbb R^m$,
and the global cost function $\tilde f$ is differentiable and $\mu$-strongly convex over $\mathbb R^m$.
\end{assumption}

\begin{assumption}
\label{ass:graph}
The graph $\mathcal G$ is strongly connected.
\end{assumption}

\begin{remark}
Problem (\ref{form}) is well-known and has applications in distributed control, estimation, and machine learning  \cite{nedic2018distributed, yang2019survey}. Unlike most existing literature, we assume convexity only of $\sum_i f_i$ rather than of each $f_i$. This is motivated by the fact that perturbations used to address privacy concerns may render local costs nonconvex in distributed optimization \cite{nozari2016differentially, pang2022gradient}.
We should also emphasize that Assumption \ref{ass:graph} allows the communication graph to be weight-unbalanced as those in \cite{cheng2022distributed, li2017distributed, zhang2022fully}.
\end{remark}

Under Assumption \ref{ass:graph}, problem (\ref{form}) can be cast into 
\begin{equation}
\label{reform}
\min~ f(\bm x):= \sum\nolimits_{i\in \mathcal V} f_i(x_i), ~~
{\rm s.t.}~\bm L \bm x = \mathbf{0}
\end{equation}
where $x_i \in \mathbb R^m$ is the decision variable of agent $i$, $\tilde m = Nm$, $\bm x = [x_1^\top, \dots, x_N^\top]^\top \in \mathbb R^{\tilde m}$, $\bm{L} = L_G \otimes I_m$ and $L_G$ is the Laplacian matrix of the graph $\mathcal G$.

%%%%%
%%%%%-----------------
%\subsection{Distributed algorithm}

We define an augmented Lagrangian function of (\ref{reform}) as
\begin{equation}
\label{lag}
\mathcal L(\bm x, \bm z) = f(\bm x) + {\bm z}^\top \bm L {\bm x} + \frac 1 2 {\bm x}^\top {\bm L} {\bm x}.
\end{equation}
where $\bm z = [z_1^\top, \dots, z_N^\top]^\top \in \mathbb R^{\tilde m}$ is the dual variable.

Inspired by the Arrow–Hurwicz–Uzawa flow in \cite{arrow1958studies}, a distributed primal-dual algorithm is designed as
\begin{equation}
\label{dyn:pdc}
\left\{
\begin{aligned}
\dot {\bm x} &= - \gamma \nabla \bar f(\bm x) - \bm L \bm z - \alpha \bm L \bm x \\
\dot {\bm z}  &= \beta \bm L \bm x
\end{aligned}
\right.
\end{equation}
where the positive scalars $\alpha$, $\beta$ and $\gamma$ are the algorithm gains,
$\bar f_i(x_i) = \frac {1}{r_i} f_i(x_i)$,
$\nabla \bar f(\bm x) = [\nabla \bar f_1(x_1)^\top, \dots, \\
\nabla \bar f_N(x_N)^\top]^\top \in \mathbb R^{\tilde m}$, and $r = [r_i] \in \mathbb R^N$ is the left eigenvector associated with the zero eigenvalue of $L_G$ such that $\bm 1_N^\top r = 1$.

%\begin{equation}
%\label{dyn:pd}
%\left\{
%\begin{aligned}
%\dot x_i &= - \gamma \nabla \bar f_i(x_i) - \sum_{j \in \mathcal V} a_{ij}(\mu_i - \mu_j) - \alpha \sum_{j \in \mathcal V} a_{ij}(x_i - x_j) \\
%\dot \mu_i &= \sum_{j \in \mathcal V} a_{ij}(x_i - x_j).
%\end{aligned}
%\right.
%\end{equation}

Intuitively, for dynamics (\ref{dyn:pdc}), the primal variable $\bm x$ descends along the negative gradient of $\mathcal L$, while the dual variable $\bm z$ ascends along its gradient.
We note that (\ref{dyn:pdc}) is a fully distributed algorithm, in which each agent only has access to its local cost and communicates with its neighbors.
The modified local cost functions $\bar f_i$ are introduced to handle weight-unbalanced networks.
We suppose that the left eigenvalue $r$ is known in advance, which is similar to the algorithms in \cite{li2017distributed,cheng2022distributed}. In case of an unknown $r$, it can be estimated by employing the auxiliary dynamics in \cite{li2017cooperative, zhang2022fully}.
The following lemma clarifies the relationship between the optimal solution of (\ref{form}) and the equilibrium point of (\ref{dyn:pdc}).
\begin{lemma}
\label{lem:Eq:KKT}
Let Assumptions \ref{ass:convex} and \ref{ass:graph} hold. Then $x^*$ is an optimal solution to (\ref{form}) if and only if there exists $\bm z^* \in \mathbb R^{\tilde m}$ such that $(\bm x^*, \bm z^*)$ is an equilibrium of (\ref{dyn:pdc}) with $\bm x^* = \mathbf{1}_N \cdot x^* \in \mathbb R^{\tilde m}$, i.e.,
\begin{equation}
\label{Eq}
\begin{aligned}
\gamma \nabla \bar f(\bm x^*) + \bm L \bm z^* = \mathbf{0}, ~{\rm and}~ \bm L \bm x^* = \mathbf{0}.
\end{aligned}
\end{equation}
\end{lemma}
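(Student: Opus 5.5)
The plan is to prove the two directions separately. Both rest on two facts from Lemma \ref{lem:graph}: the kernel of $\bm L = L_G\otimes I_m$ is $\{\mathbf 1_N\otimes v : v\in\mathbb R^m\}$ (part i, since the zero eigenvalue is simple), and the range of $\bm L$ is the annihilator of the left kernel $\{r\otimes w\}$ (part ii). The second fact is the usual $\mathrm{range}(\bm L)=\ker(\bm L^\top)^\perp$, where $\ker(L_G^\top)=\mathrm{span}\{r\}$ again because the zero eigenvalue is simple. Note that an equilibrium of (\ref{dyn:pdc}) is exactly a pair satisfying $\dot{\bm z}=\beta\bm L\bm x=\mathbf 0$ and $\gamma\nabla\bar f(\bm x)+\bm L\bm z+\alpha\bm L\bm x=\mathbf 0$. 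Once $\bm L\bm x=\mathbf 0$, this reduces to (\ref{Eq}).

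\emph{Necessity.} Suppose $(\bm x^*,\bm z^*)$ satisfies (\ref{Eq}). From $\bm L\bm x^*=\mathbf 0$ and part i), $\bm x^*=\mathbf 1_N\otimes x^*$ for some $x^*$. Multiply the first equation on the left by $r^\top\otimes I_m$ and use $r^\top L_G=\mathbf 0^\top$ to kill the $\bm z^*$ term. This gives $\gamma\sum_i r_i\nabla\bar f_i(x^*)=\gamma\sum_i\nabla f_i(x^*)=\mathbf 0$, where the factor $1/r_i$ in $\bar f_i$ exactly cancels $r_i$ (this is why the weighting was introduced). So $\nabla\tilde f(x^*)=\mathbf 0$. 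Since $\tilde f$ is strongly convex, hence convex (Assumption \ref{ass:convex}), $x^*$ is the (unique) optimal solution of (\ref{form}).

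\emph{Sufficiency.} Let $x^*$ be optimal. By strong convexity it exists and is unique, and $\nabla\tilde f(x^*)=\mathbf 0$. Set $\bm x^*=\mathbf 1_N\otimes x^*$; then $\bm L\bm x^*=\mathbf 0$ because $L_G\mathbf 1_N=\mathbf 0$. It remains to find $\bm z^*$ with $\bm L\bm z^*=-\gamma\nabla\bar f(\bm x^*)$. The vector $-\gamma\nabla\bar f(\bm x^*)$ lies in $\mathrm{range}(\bm L)$ iff it is orthogonal to every $r\otimes w$. We have $(r\otimes w)^\top\nabla\bar f(\bm x^*)=w^\top\sum_i r_i\frac1{r_i}\nabla f_i(x^*)=w^\top\nabla\tilde f(x^*)=0$, so such a $\bm z^*$ exists, and $(\bm x^*,\bm z^*)$ is an equilibrium.

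The only step needing care is the range characterization of $\bm L$ for a non-symmetric $L_G$. I would state it explicitly as $\mathrm{range}(L_G)=\{y : r^\top y=0\}$: the left null space is one-dimensional because the zero eigenvalue is simple, and $\mathrm{range}(L_G)=\ker(L_G^\top)^\perp$. This lifts to the Kronecker product componentwise in each of the $m$ coordinates. Everything else is direct substitution.
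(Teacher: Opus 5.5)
Your proof is correct and follows essentially the same route as the paper: left-multiplying the first equilibrium equation by $r^\top\otimes I_m$ cancels the $1/r_i$ weights and yields $\sum_i\nabla f_i(x^*)=\mathbf 0$, and conversely $\bm z^*$ exists because $\gamma\nabla\bar f(\bm x^*)$ is annihilated by $r^\top\otimes I_m$. The only difference is that the paper simply asserts the existence of $\bm z^*$ under Assumption~\ref{ass:graph}, whereas you justify it explicitly via $\mathrm{range}(L_G)=\{y: r^\top y=0\}$, which is a welcome sharpening (your necessity/sufficiency labels are swapped relative to the paper's, which is immaterial).
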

\textbf{Proof.} \emph{Sufficiency}:
Let $(\bm x^*, \bm z^*)$ be an equilibrium of (\ref{dyn:pdc}). Then (\ref{Eq}) holds.
By Assumption \ref{ass:graph}, there is $x^* \in \mathbb R^m$ such that $\bm x^* = \bm 1_N \otimes x^*$. Besides, $\gamma (r^\top \otimes I_m)\nabla \bar f(\bm x^*) + (r^\top \otimes I_m)\bm L \bm z^* = \gamma(\bm 1_N^\top \otimes I_m) \nabla f(\bm x^*) = \bm 0$, i.e.,
$\sum_{i \in \mathcal V} \nabla f_i(x^*) = \bm 0$. Thus, $x^*$ satisfies the first-order optimality condition of~(\ref{form}) and is the optimal solution.

\emph{Necessity}:
Let $x^*$ be a solution to (\ref{form}), and define $\bm x^* = \bm 1_N \otimes x^*$. 
Then $\sum_{i \in \mathcal V} \nabla f_i(x_i^*) = \bm 0$.
Under Assumption \ref{ass:graph}, the condition holds only if there exists $\bm z^*$ such that $\nabla \bar f(\bm x^*) + \bm L \bm z^* = \mathbf{0}$.
Hence, $(\bm x^*, \bm z^*)$ is an equilibrium of (\ref{dyn:pdc}), and this completes the proof.
$\hfill\square$

\begin{remark}
In fact, (\ref{dyn:pdc}) is a typical distributed continuous-time algorithm for consensus-based optimization, and its convergence has been established under convex local cost functions in  \cite{wang2010control,gharesifard2013distributed, kia2015distributed, zeng2016distributed, liang2019exponential}.
Extensions to weight-unbalanced digraphs were explored in \cite{zhang2022fully}, and further generalizations to uniformly jointly strongly connected time-varying topologies were studied in \cite{li2020input}.
Resorting to adaptive protocols, novel distributed algorithms were designed for convex sum of nonconvex functions in \cite{li2017distributed,li2020projection}.
Nevertheless, to the best of our knowledge, few works have been devoted to studying the roles of cost functions, algorithm gains and communication networks in the convergence of (\ref{dyn:pdc}). 
In this paper, we develop a systematic framework to analyze its performance, and investigate whether this algorithm can still be convergent under nonconvex local cost functions.
\end{remark}

%%%%%%
%%%%%%-----------------
\section{Main results}

In this section, we analyze the performance of dynamics (\ref{dyn:pdc}) from a dissipativity-based perspective.

To certify the performance of a complex nonlinear system, it is  effective to decompose it into multiple interconnected subsystems.
These simpler subsystems, together with their interconnections, reveal the structural properties of the system, and provide deeper insights into the mechanisms governing the overall system \cite{sepulchre2012constructive,khalil2002nonlinear,arcak2022compositional}.
Inspired by this idea, in order to investigate the convergence of (\ref{dyn:pdc}), we reformulate it as two interconnected subsystems. One subsystem replies on the Laplacian matrix $L_G$, while the other arises from the local costs.

Let $(\bm x^*, \bm z^*)$ be the equilibrium of (\ref{dyn:pdc}).
Define the error variables as $\tilde{\bm x} := \bm x - \bm x^*$, and $\tilde {\bm z} := \bm z - \bm z^*$. Recalling (\ref{dyn:pdc}) and (\ref{Eq}), we derive the corresponding error dynamics, and further decompose it into two feedback interconnected subsystems $\Sigma$ and $\Delta$ as
\begin{equation}
\label{dec:sig}
\begin{aligned}
\Sigma: 
\begin{cases}
\begin{bmatrix}
\dot {\tilde{\bm x}} \\
\dot {\tilde{\bm z}}
\end{bmatrix} \!\!\!\!\!\!&= 
\begin{bmatrix}
- \alpha \bm L,  &- \bm L \\
\beta \bm L, &\bm{0}
\end{bmatrix}
\begin{bmatrix}
\tilde{\bm x} \\
\tilde{\bm z}
\end{bmatrix}
+ \begin{bmatrix}
\gamma u \\
\bm 0
\end{bmatrix} \\
~~\tilde{\bm y} \!\!\!\!\!\!&= \tilde{\bm x}
\end{cases}
\end{aligned}
\end{equation}
where $u = - \Delta(\tilde {\bm y})$, $\bm y^* = \bm x^*$ and $\Delta(\tilde {\bm y}) = \nabla\bar f(\tilde{\bm y} + \bm y^*) - \nabla \bar f(\bm y^*)$. Furthermore, we obtain the block diagram representation of the error subsystems as shown in Fig. \ref{fig:decomp}.

\begin{figure}[htbp]
\centering
\includegraphics[width=0.6\linewidth]{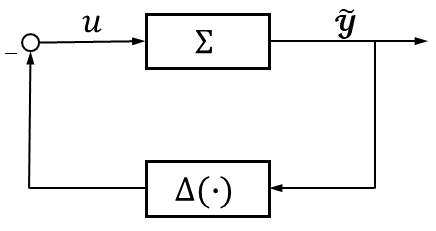}
\caption{Error dynamics as feedback interconnection of two subsystems.}
\label{fig:decomp}
\end{figure}

\begin{remark}
Fig. \ref{fig:decomp} implies that the error dynamics can be cast as a Lur'e system, consisting of a linear subsystem in feedback with a static nonlinearity. This differs from those in \cite{hatanaka2018passivity, li2020input, wang2010control}, where the dynamics is reformulated as a nonlinear subsystem with Laplacian matrices in static feedback.
This decomposition is advantageous in two aspects.
On the one hand, the linear subsystem can be 
analyzed using standard tools, while the nonlinear feedback can be characterized by properties of the costs.
On the other hand, this representation provides a clear system-level interpretation, explicitly revealing how the communication network and the cost functions jointly influence the performance of (\ref{dyn:pdc}).
\end{remark}

Throughout this paper, we define $\Pi = (\bm 1_N \otimes I_m)\cdot (r^\top \otimes I_m)$, and $\Pi_\perp = I_{\tilde m} - \Pi$. Accordingly, we decompose $\tilde {\bm x}$ as $\bar {\bm x} := \Pi \tilde {\bm x}$ and $\tilde{\bm x}_\perp := \Pi_\perp \tilde {\bm x}$.
It is worth noting that $\Pi$ induces a weighted average of $\tilde{\bm x}$, and that $\bar{\bm x}$ and $\tilde{\bm x}_\perp$ are generally not orthogonal under the standard Euclidean inner product. Such a decomposition will facilitate the analysis of dynamics (\ref{dyn:pdc}) under weight-unbalanced networks. In a similar manner, we define $\bar {\bm z}$, $\bar {\bm z}_\perp$, $\bar {\bm y}$ and $\bar {\bm y}_\perp$. 

We define the positive definite matrix $P \in \mathbb S_{++}^{2\tilde m}$ and introduce a matrix $M \in \mathbb R^{2\times 2}$ for later analysis as
\begin{equation}
\begin{aligned}
\label{def:PM}
P = \begin{bmatrix}
p_1, &p_2 \\
p_2, &1
\end{bmatrix} \otimes \bm R, ~{\rm and}~
M = \begin{bmatrix}
\alpha p_1-\beta p_2, &p_1 \\ 
p_1, &p_2
\end{bmatrix}
\end{aligned}
\end{equation}
where $\bm R := {\rm diag}\{r\} \otimes I_m$, and the scalars $p_1, p_2$ satisfy $p_1 > p_2^2$ and $p_2 > 0$.
Define $\underline  r := \min\{r_i\}$ and $\overline l := \max\{l_i\}$. Let $\underline \lambda(M)$ be the minimal eigenvalue of $M$, and $\rho(L_G)$ be the generalized algebraic connectivity of $\mathcal G$.

Denote $\tilde {\bm \eta} := [\tilde{\bm x}^\top, \tilde{\bm z}^\top]^\top$. 
We construct a storage function $V$ for the subsystem $\Sigma$ as
\begin{equation}
\begin{aligned}
V(\tilde {\bm x}, \tilde {\bm z}) = \frac 1 2
{\tilde{\bm \eta}}^\top
P {\tilde{\bm \eta}}.
\end{aligned}
\end{equation}
The next theorem characterizes the dissipativity properties of the subsystems $\Sigma$ and $\Delta$. 

\begin{theorem}
\label{thm:passivity}
Under Assumption \ref{ass:convex} and \ref{ass:graph}, the following statements are satisfied.
\begin{enumerate}[i)]
\item 
If $\alpha p_2 - \beta = p_1$, it holds for the subsystems $\Sigma$ that
\begin{equation}
\begin{aligned}
\label{pf:th1:1}
\dot V \le &
- \underline\lambda(M) \rho(L_G) \underline r \Vert \tilde{\bm y}_\perp \Vert^2 - \Big(\underline \lambda(M) \rho(L_G) \underline r - \frac {p_2 \gamma}{2\delta}\Big) \\
&\cdot \Vert \tilde{\bm z}_\perp \Vert^2
+ p_1 \gamma \langle \tilde{\bm y}, \bm R u\rangle
+ \frac {\delta p_2 \gamma}{2}\Vert \bm R u \Vert^2.
\end{aligned}
\end{equation}
where $\delta$ is an arbitrary positive scalar.

\item For the subsystems $\Delta$, we have
\begin{equation}
\begin{aligned}
\label{pf:th1:2}
\langle \tilde {\bm y}, \bm R\Delta(\tilde{\bm y}) \rangle 
\ge \frac {\mu}{2}\Vert \bar{\bm y} \Vert^2 - \big(\overline l + \frac {2 \overline l^2}{\mu}\big)\Vert \tilde{\bm y}_{\perp}\Vert^2.
\end{aligned}
\end{equation}
\end{enumerate}
\end{theorem}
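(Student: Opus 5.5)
The plan is to treat the two parts separately. Part i) is a direct Lyapunov-derivative computation in which the gain condition makes the quadratic form block-structured. Part ii) is a splitting of $\tilde{\bm y}$ into weighted-average and disagreement components, followed by a strong-convexity/Lipschitz estimate. Throughout, I use that $\Pi_\perp$ produces vectors with $(r^\top\otimes I_m)\tilde{\bm x}_\perp=\bm 0$, and that $\bm R\bm L$ symmetrizes to $\tfrac12(\tilde L\otimes I_m)$, where $\tilde L$ is the matrix of Lemma~\ref{lem:graph}.

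\emph{Part i).} I write $2V=p_1\tilde{\bm x}^\top\bm R\tilde{\bm x}+2p_2\tilde{\bm x}^\top\bm R\tilde{\bm z}+\tilde{\bm z}^\top\bm R\tilde{\bm z}$ and differentiate along $\Sigma$. Collecting terms gives the coefficients: $-(\alpha p_1-\beta p_2)$ on $\tilde{\bm x}^\top\bm R\bm L\tilde{\bm x}$, $-p_1$ on $\tilde{\bm x}^\top\bm R\bm L\tilde{\bm z}$, $-(\alpha p_2-\beta)$ on $\tilde{\bm z}^\top\bm R\bm L\tilde{\bm x}$, and $-p_2$ on $\tilde{\bm z}^\top\bm R\bm L\tilde{\bm z}$, plus the input terms $p_1\gamma\langle\tilde{\bm x},\bm Ru\rangle+p_2\gamma\langle\tilde{\bm z},\bm Ru\rangle$. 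The hypothesis $\alpha p_2-\beta=p_1$ makes the cross coefficients equal, so the drift term is $-\tfrac12\tilde{\bm\eta}^\top(M\otimes\tilde L\otimes I_m)\tilde{\bm\eta}$. Since $\tilde L\succeq\bm 0$ and $M-\underline\lambda(M)I_2\succeq\bm 0$, this is at most $-\tfrac12\underline\lambda(M)\big(\tilde{\bm x}^\top(\tilde L\otimes I_m)\tilde{\bm x}+\tilde{\bm z}^\top(\tilde L\otimes I_m)\tilde{\bm z}\big)$. Here I take $\underline\lambda(M)\ge0$ as implicit in the intended use.

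Next I use that $\tilde L(\bm 1_N\otimes\cdot)=\bm 0$. This is because $L_G\bm 1_N=\bm 0$ and $r^\top L_G=\bm 0^\top$ give $\tilde L\bm 1_N=RL_G\bm 1_N+L_G^\top r=\bm 0$. Hence only the $\perp$-components survive. The definition of $\rho(L_G)$ then yields $\tilde{\bm x}_\perp^\top(\tilde L\otimes I_m)\tilde{\bm x}_\perp\ge2\rho(L_G)\tilde{\bm x}_\perp^\top\bm R\tilde{\bm x}_\perp\ge2\rho(L_G)\underline r\Vert\tilde{\bm x}_\perp\Vert^2$, and likewise for $\tilde{\bm z}_\perp$.

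For the $p_2\gamma\langle\tilde{\bm z},\bm Ru\rangle$ term, I exploit the invariance $(r^\top\otimes I_m)\dot{\tilde{\bm z}}=\beta(r^\top L_G\otimes I_m)\tilde{\bm x}=\bm 0$. The dual equilibrium $\bm z^*$ is only defined modulo $\bm 1_N\otimes\cdot$, so I choose it with the same weighted average as $\bm z(0)$. Then $\bar{\bm z}\equiv\bm 0$ and $\tilde{\bm z}=\tilde{\bm z}_\perp$. Young's inequality then gives $p_2\gamma\langle\tilde{\bm z}_\perp,\bm Ru\rangle\le\frac{p_2\gamma}{2\delta}\Vert\tilde{\bm z}_\perp\Vert^2+\frac{\delta p_2\gamma}{2}\Vert\bm Ru\Vert^2$. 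Finally, $\tilde{\bm y}=\tilde{\bm x}$ gives (\ref{pf:th1:1}). Justifying this normalization of $\bm z^*$, or otherwise disposing of $\bar{\bm z}$, is the one delicate point in part i).

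\emph{Part ii).} Let $g_i(s):=\nabla f_i(s+x^*)-\nabla f_i(x^*)$ and $y_a:=(r^\top\otimes I_m)\tilde{\bm y}$, so that $\bar{\bm y}=\bm 1_N\otimes y_a$. Because $\bm R$ cancels the $1/r_i$ scaling, we have $\langle\tilde{\bm y},\bm R\Delta(\tilde{\bm y})\rangle=\sum_i\langle\tilde y_i,g_i(\tilde y_i)\rangle$. I split $\tilde y_i=y_a+\tilde y_{\perp,i}$ and $g_i(\tilde y_i)=g_i(y_a)+[g_i(\tilde y_i)-g_i(y_a)]$. This produces three pieces:
\begin{itemize}
\item The first is $\sum_i\langle y_a,g_i(y_a)\rangle\ge\mu\Vert y_a\Vert^2$, by strong convexity of $\tilde f$ and $\sum_i\nabla f_i(x^*)=\bm 0$. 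This is the only place convexity enters; the local $f_i$ are never assumed convex.
\item The second is the cross term $\sum_i\langle y_a,g_i(\tilde y_i)-g_i(y_a)\rangle\ge-\overline l\Vert y_a\Vert\sum_i\Vert\tilde y_{\perp,i}\Vert$.
\item The third is $\sum_i\langle\tilde y_{\perp,i},g_i(\tilde y_i)\rangle$. I bound it below by writing $g_i(\tilde y_i)=g_i(y_a)+(g_i(\tilde y_i)-g_i(y_a))$, which gives $-\overline l\Vert\tilde{\bm y}_\perp\Vert^2-\overline l\,\Vert y_a\Vert\sum_i\Vert\tilde y_{\perp,i}\Vert$.
\end{itemize}
Absorbing the mixed terms by Young's inequality with weight $\mu/2$ produces the $\frac{2\overline l^2}{\mu}\Vert\tilde{\bm y}_\perp\Vert^2$ term and leaves $\frac{\mu}{2}$ times the average part.

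The main obstacle is bookkeeping the normalization between $\Vert y_a\Vert^2$ and $\Vert\bar{\bm y}\Vert^2=N\Vert y_a\Vert^2$. Relatedly, $\sum_i\Vert\tilde y_{\perp,i}\Vert\le\sqrt N\Vert\tilde{\bm y}_\perp\Vert$. I expect the $N$ factors to cancel between the strong-convexity piece and the Cauchy–Schwarz piece. I would track them explicitly to confirm that the stated constants $\frac{\mu}{2}$ and $\overline l+\frac{2\overline l^2}{\mu}$ come out, adjusting the Young weight if necessary.
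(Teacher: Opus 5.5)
Part i) is fine and coincides with the paper's proof. It uses the same block computation giving $-\tfrac12\tilde{\bm\eta}^\top\big(M\otimes(\tilde L\otimes I_m)\big)\tilde{\bm\eta}$, the same use of $\rho(L_G)$ on the $\perp$-components, and the same Young step. Your explicit normalization $(r^\top\otimes I_m)\bm z^*=(r^\top\otimes I_m)\bm z(0)$ is exactly what the paper's line ``$(r^\top\otimes I)\dot{\bm z}=\bm 0$, hence $\Vert\tilde{\bm z}\Vert=\Vert\tilde{\bm z}_\perp\Vert$'' tacitly relies on. You also need not assume $\underline\lambda(M)\ge 0$: under $\alpha p_2-\beta=p_1$ one has $\det M=\beta(p_1-p_2^2)>0$ and $p_2>0$, so $M\succ\bm 0$.

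The genuine gap is the last step of part ii). The $N$ factors do not cancel, and no choice of Young weight repairs this. Your three pieces give exactly
\begin{equation*}
\langle\tilde{\bm y},\bm R\Delta(\tilde{\bm y})\rangle\ \ge\ \mu\Vert y_a\Vert^2-2\overline l\sqrt N\,\Vert y_a\Vert\,\Vert\tilde{\bm y}_\perp\Vert-\overline l\,\Vert\tilde{\bm y}_\perp\Vert^2 .
\end{equation*}
Since $\Vert\bar{\bm y}\Vert^2=N\Vert y_a\Vert^2$, the cross-term bound becomes $2\overline l\Vert\bar{\bm y}\Vert\Vert\tilde{\bm y}_\perp\Vert$, with no $N$. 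The strong-convexity term, however, is only $\frac{\mu}{N}\Vert\bar{\bm y}\Vert^2$. What the argument actually yields is
\begin{equation*}
\langle\tilde{\bm y},\bm R\Delta(\tilde{\bm y})\rangle\ \ge\ \frac{\mu}{2N}\Vert\bar{\bm y}\Vert^2-\Big(\overline l+\frac{2N\overline l^2}{\mu}\Big)\Vert\tilde{\bm y}_\perp\Vert^2 ,
\end{equation*}
that is, \eqref{pf:th1:2} with $\mu$ replaced by $\mu/N$.

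The stated inequality itself is false for $N\ge 3$. Take $m=1$, $f_i(s)=\frac{\mu}{2N}s^2$, so that $\tilde f(s)=\frac{\mu}{2}s^2$ and $x^*=0$; Assumption~\ref{ass:convex} holds with $l_i=\mu/N$, and any strongly connected graph satisfies Assumption~\ref{ass:graph}. Let $\tilde{\bm y}=c\bm 1_N$ with $c\neq 0$. Then $\bar{\bm y}=\tilde{\bm y}$ and $\tilde{\bm y}_\perp=\bm 0$ for every admissible $r$. The left side equals $\mu c^2$, while the right side equals $\frac{N\mu}{2}c^2$. Because $\tilde{\bm y}_\perp=\bm 0$ here, no Young inequality is involved, so no reweighting can recover the constant $\frac{\mu}{2}$.

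The paper's proof contains the same slip. It bounds $\langle\bar{\bm y},\nabla f(\bar{\bm y}+\bm y^*)-\nabla f(\bm y^*)\rangle$ below by $\mu\Vert\bar{\bm y}\Vert^2$, whereas strong convexity of $\tilde f$ only gives $\mu\Vert y_a\Vert^2=\frac{\mu}{N}\Vert\bar{\bm y}\Vert^2$. Your route is the paper's route and is sound, but it proves the corrected bound rather than the stated one. You should state the result that way instead of expecting the constants to match. The downstream argument in Theorem~\ref{thm:convergence} only needs positive constants, so it goes through after substituting $\mu/N$ for $\mu$ throughout.
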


\textbf{Proof.}
Recalling (\ref{dec:sig}) gives
\begin{equation*}
\begin{aligned}
\dot V = 
- \frac 12{\tilde{\bm \eta}}^\top
Q {\tilde{\bm \eta}}
+ p_1 \gamma \langle \tilde{\bm x}, \bm R u\rangle
+ p_2 \gamma \langle \tilde{\bm z}, \bm R u\rangle
\end{aligned}
\end{equation*}
where
\begin{equation*}
\begin{aligned}
Q \!=\! \begin{bmatrix}
(\alpha p_1 \!-\! \beta p_2)(\bm L^\top\bm R \!+\! \bm R \bm L),
& (\alpha p_2 \!- \! \beta) \bm L^\top \bm R \!+\! p_1 \bm R \bm L \\
(\alpha p_2 \!-\!\beta) \bm R \bm L \!+\! p_1  \bm L^\top \bm R
&p_2(\bm L^\top\bm R \!+\! \bm R \bm L)
\end{bmatrix}
\end{aligned}.
\end{equation*}
If $\alpha p_2 - \beta = p_1$, then 
$Q = M \otimes(\bm L^\top\bm R+\bm R\bm L)$, where $M$ is defined in (\ref{def:PM}).
Moreover, it is straightforward to verify that $M \succ \bm 0$ if $P \succ \bm 0$. Consequently, $Q \succeq \bm 0$ since $\bm L^\top\bm R + \bm R \bm L \succeq \bm{0}$.
Moreover,  there exists $\tilde Q \in \mathbb R^{\tilde m \times \tilde m}$ such that $\bm L^\top\bm R+\bm R\bm L = \tilde Q^\top \tilde Q$, and 
$Q = (I_2 \otimes \tilde Q^\top) (M \otimes I_{\tilde m})(I_2 \otimes \tilde Q)$.
Then ${\tilde{\bm \eta}}^\top
Q {\tilde{\bm \eta}} \ge \underline \lambda(M) \tilde{\bm \eta}^\top [I_2 \otimes (\bm L^\top\bm R + \bm R \bm L)] \tilde{\bm \eta}$.
Furthermore, in light of Lemma \ref{lem:graph}~iii), we have
$${\tilde{\bm \eta}}^\top Q {\tilde{\bm \eta}} \ge 2\underline\lambda(M) \rho(L_G)(\tilde{\bm x}_\perp^\top \bm R \bm \tilde{\bm x}_\perp  + \tilde{\bm z}_\perp^\top \bm R \tilde{\bm z}_\perp).$$ 
As a result, we derive
\begin{equation*}
\begin{aligned}
\dot V \le &
- \underline\lambda(M) \rho(L_G) \underline r \big(\Vert \tilde{\bm x}_\perp \Vert^2 + \Vert \tilde{\bm z}_\perp \Vert^2 \big) \\
&+ p_1 \gamma \langle \tilde{\bm x}, \bm R u\rangle
+ p_2 \gamma \langle \tilde{\bm z}, \bm R u\rangle.
\end{aligned}
\end{equation*}
By (\ref{dyn:pdc}), it is clear that $(r^\top \otimes I_{\tilde m}) \dot {\bm z} = \bm{0}$, and then,
$\Vert \tilde {\bm z} \Vert^2 = \Vert \tilde{\bm z}_\perp \Vert^2$.
For any $\delta > 0$, we have
$\langle \tilde{\bm z}, \bm R u\rangle \le 
\frac {1}{2 \delta} \Vert \bm z_\perp\Vert^2 + \frac {\delta}{2}\Vert \bm R u \Vert^2.$
Therefore,
\begin{equation*}
\begin{aligned}
\dot V \le &
- \underline\lambda(M) \rho(L_G) \underline r  \Vert \tilde{\bm x}_\perp \Vert^2 - \Big(\underline\lambda(M) \rho(L_G) \underline r - \frac {p_2 \gamma}{2\delta}\Big)\\
&\cdot \Vert \tilde{\bm z}_\perp \Vert^2
+ p_1 \gamma \langle \tilde{\bm x}, \bm R u\rangle
+ \frac {\delta p_2 \gamma}{2}\Vert \bm R u \Vert^2.
\end{aligned}
\end{equation*}
Part i) follows immediately from $\tilde {\bm y} = \tilde {\bm x}$.

By the definition of $\bar f_i$, we obtain
\begin{equation*}
\begin{aligned}
\langle \tilde {\bm y}, \bm R\Delta(\tilde{\bm y}) \rangle &= \big\langle \tilde {\bm y}, \nabla f(\tilde{\bm y} + \bm y^*) - \nabla f(\bm y^*)\big\rangle \\
& = \big\langle \bar {\bm y} + \tilde{\bm y}_\perp, \nabla f(\tilde{\bm y} + \bm y^*) - \nabla f(\bar {\bm y} + \bm y^*) \\
&~~~~~~~~~~~~~~ + \nabla f(\bar {\bm y} + \bm y^*) - \nabla f(\bm y^*) \big\rangle \\
&\ge - \overline l \Vert  \tilde{\bm y}_{\perp}\Vert^2 - 2 \overline l \Vert \tilde{\bm y}_{\perp} \Vert \cdot \Vert \bar{\bm y}\Vert + \mu \Vert \bar{\bm y} \Vert^2 \\
&\ge \frac {\mu}{2}\Vert \bar{\bm y} \Vert^2 - \Big(\overline l + \frac {2\overline l^2}{\mu}\Big)\Vert \tilde{\bm y}_{\perp}\Vert^2.
\end{aligned}
\end{equation*}
where the first inequality follows from the Lipschitz continuity of $\nabla f_i$ and the strong convexity of $f$, and the second inequality is derived using Young’s inequality. Thus, part ii) holds, and this completes the proof.
$\hfill\square$

\begin{remark}
Theorem \ref{thm:passivity}~i) indicates that by appropriately selecting the gains $\alpha$, $\beta$ and $\gamma$, the subsystem $\Sigma$ is dissipative with respect to a suitable supply function. 
In a special case $\gamma = 0$, the state $\tilde {\bm \eta}$ converges under proper $\alpha$ and $\beta$, and the result generalizes \cite[Lemma 5.3]{gharesifard2013distributed} to weight-unbalanced graphs.
It is evident that the operator $\Delta$ is not passive due to the nonconvexity of $f_i$. However, Theorem \ref{thm:passivity}~ii) fully characterizes its properties under Assumption \ref{ass:convex}.
Theorem \ref{thm:passivity} provides new insights into the roles of the network topology, algorithms gains and the cost functions in the convergence of (\ref{dyn:pdc}).
To the best of our knowledge, this is the first work to investigate the dispassivity properties of $\Sigma$ and $\Delta$ from this perspective.
\end{remark}

In what follows, we establish the convergence of (\ref{dyn:pdc}).
\begin{theorem}
\label{thm:convergence}
Consider dynamics (\ref{dyn:pdc}).
Let Assumptions \ref{ass:convex} and \ref{ass:graph} hold.
Suppose that $\alpha^2 > 4 \beta$. 
Then there exists a gain $\gamma>0$ or a communication graph $\mathcal G$
such that the trajectory  $\big(\bm x(t), \bm z(t)\big)$ converges to an equilibrium $(\bm x^*, \bm z^*)$ of (\ref{dyn:pdc}) with an exponential rate, where $\bm x^* = \bm 1_N \otimes x^*$, and $x^*$ is the optimal solution to (\ref{form}).
\end{theorem}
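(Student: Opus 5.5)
The plan is to combine the two dissipativity inequalities of Theorem \ref{thm:passivity} through the feedback interconnection $u=-\Delta(\tilde{\bm y})$ and use $V$ as a Lyapunov function for the error dynamics.

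\textbf{Step 1: choice of $p_1,p_2$.} Given $\alpha^2>4\beta$, we impose $\alpha p_2-\beta=p_1$ together with $p_1>p_2^2$, $p_2>0$. Substituting $p_1=\alpha p_2-\beta$ turns the condition into $p_2^2-\alpha p_2+\beta<0$. This quadratic has two real positive roots exactly when $\alpha^2>4\beta$, so we pick $p_2$ strictly between them. Then $P\succ\bm 0$, hence $M\succ\bm 0$ and $\underline\lambda(M)>0$, and part i) of Theorem \ref{thm:passivity} applies. We also need $p_1>0$, which follows from $p_1>p_2^2>0$.

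\textbf{Step 2: closing the loop.} Substitute $u=-\Delta(\tilde{\bm y})$ into \eqref{pf:th1:1}. For the cross term, part ii) gives
\[
-p_1\gamma\langle\tilde{\bm y},\bm R\Delta\rangle\le -\tfrac{p_1\gamma\mu}{2}\Vert\bar{\bm y}\Vert^2+p_1\gamma\big(\overline l+\tfrac{2\overline l^2}{\mu}\big)\Vert\tilde{\bm y}_\perp\Vert^2 .
\]
For the quadratic term, $\Vert\bm R\Delta\Vert^2=\Vert\nabla f(\tilde{\bm y}+\bm y^*)-\nabla f(\bm y^*)\Vert^2\le\overline l^2\Vert\tilde{\bm y}\Vert^2$. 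Since $\bar{\bm y}$ and $\tilde{\bm y}_\perp$ are not orthogonal, we bound $\Vert\tilde{\bm y}\Vert^2\le 2\Vert\bar{\bm y}\Vert^2+2\Vert\tilde{\bm y}_\perp\Vert^2$. Collecting terms,
\[
\dot V\le -c_1\Vert\bar{\bm y}\Vert^2-c_2\Vert\tilde{\bm y}_\perp\Vert^2-c_3\Vert\tilde{\bm z}_\perp\Vert^2,
\]
with
\[
c_1=\gamma\big(\tfrac{p_1\mu}{2}-\delta p_2\overline l^2\big),\qquad
c_2=\underline\lambda(M)\rho(L_G)\underline r-\gamma\big(p_1\overline l+\tfrac{2p_1\overline l^2}{\mu}+\delta p_2\overline l^2\big),\qquad
c_3=\underline\lambda(M)\rho(L_G)\underline r-\tfrac{p_2\gamma}{2\delta}.
\]

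\textbf{Step 3: making $c_1,c_2,c_3>0$.} This is the main obstacle, because $\delta$ appears in opposite directions in $c_1$ and $c_3$. First fix $\delta<p_1\mu/(2p_2\overline l^2)$, which gives $c_1>0$ for every $\gamma>0$. With $\delta$ fixed, both $c_2$ and $c_3$ have the form $\underline\lambda(M)\rho(L_G)\underline r-\gamma K$ for constants $K$ independent of $\gamma$ and $\mathcal G$. There are then two ways to conclude.
\begin{itemize}
\item Choose $\gamma$ small enough that $\gamma K<\underline\lambda(M)\rho(L_G)\underline r$.
\item Keep $\gamma$ fixed and design $\mathcal G$, for instance by scaling its edge weights, which scales $\rho(L_G)$ while leaving $r$ unchanged, so that $\rho(L_G)$ is large enough.
\end{itemize}
Either way all three constants are positive. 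Note that $\underline r$ may change if the topology itself changes, so the graph-design claim is stated via the product $\rho(L_G)\underline r$.

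\textbf{Step 4: exponential rate.} We need $\dot V\le-cV$. Write $\tilde{\bm z}=\bar{\bm z}+\tilde{\bm z}_\perp$. Because $(r^\top\otimes I_m)\dot{\bm z}=\bm 0$, the weighted average of $\bm z$ is invariant, and the equilibrium $\bm z^*$ is selected with $\Pi\bm z^*=\Pi\bm z(0)$. This is possible since $\bm z^*+\bm 1_N\otimes w$ is an equilibrium for any $w$, as $\bm L(\bm 1_N\otimes w)=\bm 0$. Hence $\bar{\bm z}\equiv\bm 0$ and $\tilde{\bm z}=\tilde{\bm z}_\perp$.

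Since $\Pi$ is a bounded projection, $\Vert\tilde{\bm x}\Vert^2\le 2(\Vert\bar{\bm x}\Vert^2+\Vert\tilde{\bm x}_\perp\Vert^2)$. Therefore $\dot V\le -c\Vert\tilde{\bm\eta}\Vert^2\le-\tfrac{2c}{\lambda_{\max}(P)}V$ for some $c>0$, and $V$, hence $\tilde{\bm\eta}$, decays exponentially.

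By Lemma \ref{lem:Eq:KKT}, the limit $(\bm x^*,\bm z^*)$ satisfies $\bm x^*=\bm 1_N\otimes x^*$ with $x^*$ optimal for \eqref{form}. Existence and uniqueness of $x^*$ follow from the strong convexity of $\tilde f$.
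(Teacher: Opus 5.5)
Your proposal is correct and follows the paper's proof. The ingredients match: the same constraint $\alpha p_2-\beta=p_1$ with $p_1>p_2^2$ (the paper just takes the midpoint $p_2=\alpha/2$, $p_1=\alpha^2/2-\beta$, and fixes $\delta=p_1\mu/(4p_2\overline l^2)$, half your threshold), the same closing of the loop through Theorem~\ref{thm:passivity} and the Lipschitz bound on $\Vert\bm R\Delta\Vert^2$, and the same two alternatives of shrinking $\gamma$ or uniformly scaling the edge weights. Your explicit choice of $\bm z^*$ with $\Pi\bm z^*=\Pi\bm z(0)$, which gives $\tilde{\bm z}=\tilde{\bm z}_\perp$ along trajectories, and your bound $\Vert\tilde{\bm x}\Vert^2\le 2(\Vert\bar{\bm x}\Vert^2+\Vert\tilde{\bm x}_\perp\Vert^2)$ make rigorous two steps that the paper only asserts.
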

\textbf{Proof.}
Since $\Vert \tilde {\bm y}\Vert^2 \le 2\big(\Vert \bar{\bm y}\Vert^2 + \Vert\tilde{\bm y}_\perp\Vert^2\big)$, it follows from Assumption \ref{ass:convex} that
\begin{equation}
\begin{aligned}
\label{pf:th1:3}
\Vert \bm R \Delta(\tilde{\bm y})\Vert^2 &= \Vert \bm \nabla f(\tilde{\bm y} + \bm y^*) - \nabla f(\bm y^*)\Vert^2 \\
&\le 2l^2 \big(\Vert \bar{\bm y} \Vert^2 + \Vert \tilde{\bm y}_\perp \Vert^2 \big).
\end{aligned}
\end{equation}
Note that $u = - \Delta(\tilde {\bm y})$. Take $\delta = \frac{p_1 \mu}{4 p_2 l^2}$.
Plugging (\ref{pf:th1:2}), (\ref{pf:th1:3}) into (\ref{pf:th1:1}), we obtain
\begin{equation*}
\begin{aligned}
\dot V \le & - \frac{p_1 \gamma \mu}{4} \Vert \bar {\bm y} \Vert^2 - \Big(\underline\lambda(M) \rho(L_G) \underline r - \frac {2 p_2^2 \gamma \overline l^2}{p_1 \mu}\Big)\Vert \tilde{\bm z}_\perp \Vert^2 \\
&-\! \Big(\underline\lambda(M) \rho(L_G) \underline r - \frac{p_1 \gamma \mu}{4} - p_1 \gamma \big(\overline l + \frac {2 \overline l^2}{\mu}\big)\Big)\Vert \tilde{\bm y}_\perp \Vert^2.
\end{aligned}
\end{equation*}
Suppose that $L_G$ is given and $\alpha^2 > 4 \beta$. 
Set $p_2 = \frac \alpha 2$, and $p_1 = \alpha p_2 - \beta = \frac {\alpha^2}{2} - \beta$.
Then it is clear that both $P$ and $M$ are positive definite.
Note that $\Vert \tilde {\bm x}\Vert = \Vert \tilde{\bm y}\Vert$ and $\Vert \tilde {\bm z}\Vert = \Vert \tilde{\bm z}_\perp\Vert$.
If
$$\gamma < \min\left\{\frac{\underline\lambda(M) \rho(L_G) \underline r p_1 \mu}{2p_2^2 \overline l^2},  \frac{4\underline\lambda(M) \rho(L_G) \underline r \mu}{p_1 \mu^2 + 4p_1 (\overline l \mu + 2 \overline l^2)}\right\},
$$
then there exists $\tilde \delta > 0$ such that
\begin{equation}
\begin{aligned}
\dot V \le - \tilde \delta \big(\Vert \tilde{\bm x}\Vert^2 + \Vert \tilde{\bm z}\Vert^2\big) \le -\frac{2\tilde \delta}{\bar{\lambda}(P)} V
\end{aligned}
\end{equation}
where $\bar{\lambda}(P)$ denotes the maximal eigenvalue of $P$.  Therefore, the trajectory  $\big(\bm x(t), \bm z(t)\big)$ converges to the equilibrium $(\bm x^*, \bm z^*)$ of (\ref{dyn:pdc}) with an exponential rate. 

Given $\alpha$, $\beta$ and $\gamma$ such that $\alpha^2 > 4 \beta$, we also set $p_1 = \frac {\alpha^2}{2} - \beta$, and $p_2 = \frac \alpha 2$. By properly designing the communication network such that 
$$\rho(L_G) \underline r > \left\{\frac {2 p_2^2 \gamma l^2}{p_1 \mu \underline\lambda(M)}, \frac{p_1 \gamma}{\underline\lambda(M)} \left(\frac{\mu}{4} + \frac {\overline l \mu + 2 \overline l^2}{\mu}\right)\right\},$$ for example, by uniformly scaling the adjacency matrix of $\mathcal G$, it can hold that $\dot V \le - \hat \delta V$ for some $\hat \delta > 0$. Hence, the trajectory $\big(\bm x(t), \bm z(t)\big)$  converges exponentially to the equilibrium $(\bm x^*, \bm z^*)$ of (\ref{dyn:pdc}).

Resorting to Lemma 1, we derive $\bm x^* = 1_N^\top \otimes x^*$, and thus, the proof is completed.
$\hfill\square$

\begin{remark}
Theorem \ref{thm:convergence} demonstrates that dynamics (\ref{dyn:pdc}) achieves the optimal solution of (\ref{form}) under appropriate algorithm gains or communication graphs even though the local cost functions are nonconvex. This generalizes existing results reported in \cite{wang2010control, gharesifard2013distributed, kia2015distributed, zeng2016distributed, liang2019exponential}, and significantly extends the applicability of primal-dual methods.
The key reason underlying this extension is that the overall system is decomposed into two coupled subsystems, whose intrinsic properties are fully explored. In particular, although $\Delta$ is not passive, $\Sigma$ can be rendered passive excess by properly selecting the gains or by designing the graph. The passivity shortage of $\Delta$ can be compensated by $\Sigma$, which guarantees the convergence of the overall system.
\end{remark}

%%%%%
%%%%%-------------------
\section{Numerical simulations}

In this section, we carry out simulations for illustration.

Consider a network consisting of five agents. Two weight-unbalanced and strongly connected communication networks are employed, as shown in Fig. \ref{fig:grah}. In Fig. \ref{fig:grah}(a), the adjacency matrix is set to $a_{ij} = 1$ if $a_{ij} \not = 0$, whereas in Fig. \ref{fig:grah}(b), all nonzero entries are set to $4$.
The local cost functions in (\ref{form}) are given by
$f_1(x) = x_{(1)}^2 + x_{(1)} x_{(2)} + 5  x_{(3)}^2 - x_{(4)}^2 + e^{x_{(1)}}$,
$f_2(x) = 2x_{(2)}^2 - 2 x_{(1)} x_{(2)} - x_{(3)}^2 - e^{x_{(1)}}$,
$f_3(x) = x_{(1)}^2 - 2 x_{(3)}^2 + x_{(2)} x_{(3)} - \sin(x_{(4)})$,
$f_4(x) = -x_{(1)}^2 + 3  x_{(4)}^2 + \sin(x_{(4)})$,
$f_5(x) = - x_{(2)}^2 +  x_{(4)}^2$,
where $x_{(k)}$ denotes the $k$th component of $x$.
It is clear that none of $f_i$ is convex, but the global cost $\tilde f(x)$ is strongly convex.

For dynamics (\ref{dyn:pdc}), we first consider the case $\gamma = 0$, which corresponds to the absence of local costs. Fig. \ref{fig:consensus} compares the trajectories of $\bm x^\top \bm L \bm x$ and $\bm z^\top \bm L \bm z$ under different choices of $\alpha$ and $\beta$.
Given $\beta = 1$, the dynamics fails to converge when $\alpha = 1$, but converges when $\alpha = 5$.
The results imply that the condition $\alpha^2 > 4\beta$ is critical to guarantee the convergence of (\ref{dyn:pdc}).

We then verify that for given $\alpha$ and $\beta$, the gain $\gamma$ plays an important role on the convergence of algorithm (\ref{dyn:pdc}).
Fig. \ref{fig:converge:gamma} shows the trajectories of $f(\bm x)$ under different choices of $\gamma$.
The results indicate that (\ref{dyn:pdc}) is generally not convergent under Assumption \ref{ass:convex}. However, the convergence can be achieved by properly selecting $\gamma$.

Finally, we show that the communication topology also influences the convergence of (\ref{dyn:pdc}). Using the same gains as those in Fig. \ref{fig:converge:gamma}(b), we conduct experiments with the graph in Fig. \ref{fig:grah}(b). Fig. \ref{fig:converge:graph}(a) demonstrates that (\ref{dyn:pdc}) is convergent under this graph.
Furthermore, Fig. \ref{fig:converge:graph}(b) plots the trajectory of $\log(\Vert \bm x - \bm x^*\Vert)$, and indicates the exponential convergence of \eqref{dyn:pdc}.

\begin{figure}[htbp]
\centering
\includegraphics[width=0.5\linewidth]{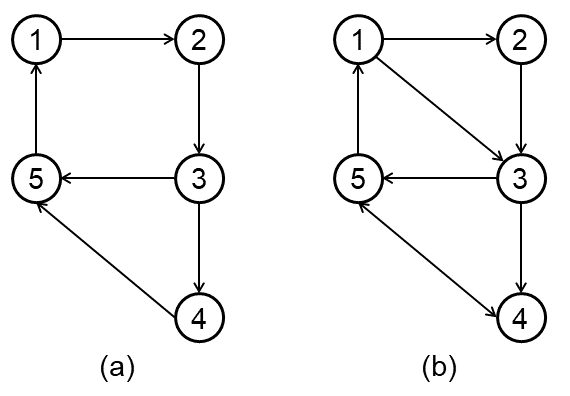}
\caption{The communication graphs of the five-agent network.}
\label{fig:grah}
\end{figure}

\begin{figure}[htbp]
\centering
\includegraphics[width=1\linewidth]{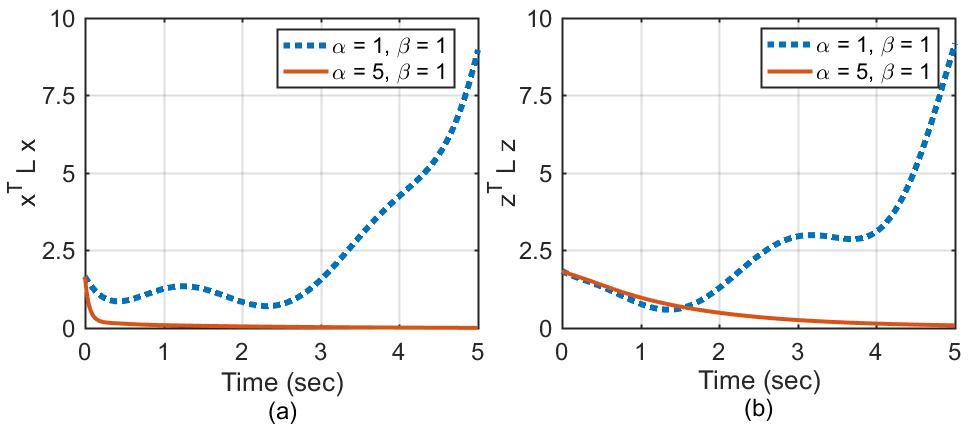}
\caption{Comparison of trajectories under different choices of $\alpha$ and $\beta$: (a) $\bm x^\top \bm L \bm x$; (b) $\bm z^\top \bm L \bm z$.}
\label{fig:consensus}
\end{figure}

\begin{figure}[htbp]
\centering
\includegraphics[width=1\linewidth]{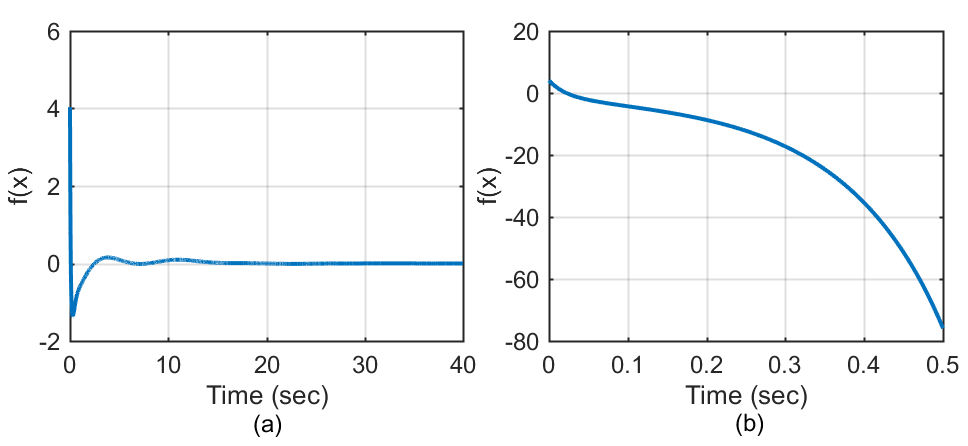}
\caption{Given $\alpha = 5$ and $\beta = 1$, trajectories of $f(\bm x)$ under different $\gamma$: (a) $\gamma = 0.1$; (b) $\gamma = 0.5$.}
\label{fig:converge:gamma}
\end{figure}

\begin{figure}[htbp]
\centering
\includegraphics[width=1\linewidth]{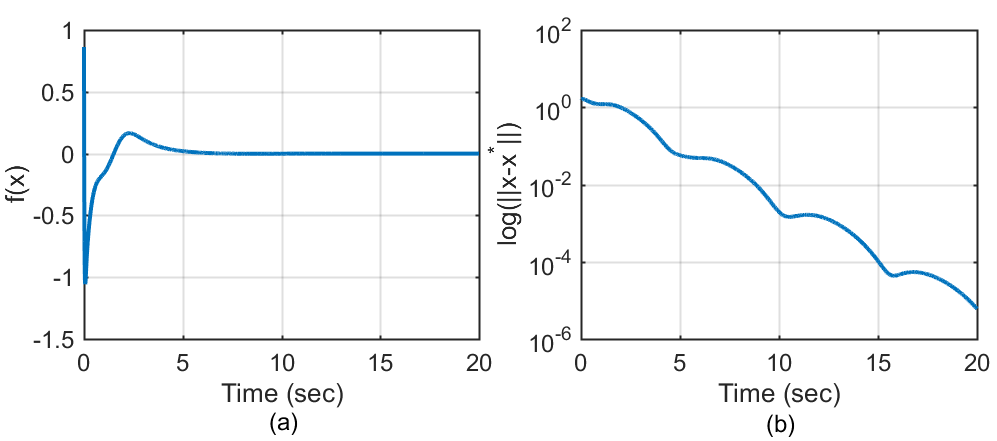}
\caption{Trajectory with $\alpha = 5$, $\beta = 1$ and $\gamma = 0.5$ under the network in Fig. \ref{fig:grah}(b): (a) $f(\bm x)$; (b) $\log(\Vert \bm x - \bm x^*\Vert)$.}
\label{fig:converge:graph}
\end{figure}

%%%%
%%%%-----------------
\section{Conclusion}

This paper investigated the convergence of a primal-dual algorithm to solve a distributed optimization problem over weight-unbalanced networks, where the global cost function is convex, while the individual local cost functions may be nonconvex.
The algorithm was decomposed as a Lur'e system.
By fully exploring the properties of the Lapalacian matrix of the graph as well as the local costs, we established that the linear subsystem is dissipative, whereas the static gradient feedback is not passive. It was found that the algorithm converges to an equilibrium with an exponential rate under proper algorithm gains or communication networks. Finally, numerical simulations were provided for illustration.

\bibliographystyle{IEEEtran}
\bibliography{references.bib}

@article{yuan2024multi,
	title={Multi-agent online optimization},
	author={Yuan, Deming and Proutiere, Alexandre and Shi, Guodong},
	journal={Foundations and Trends in Optimization},
	volume={7},
	number={2-3},
	pages={81--263},
	year={2024},
	publisher={Emerald Publishing Limited}
}

@article{neal2011distributed,
	title={Distributed optimization and statistical learning via the alternating direction method of multipliers},
	author={Neal, Parikh and Eric, Chu and Borja, Peleato and Jonathan, Eckstein},
	journal={Foundations and Trends{\textregistered} in Machine learning},
	volume={3},
	number={1},
	pages={1--122},
	year={2011},
	publisher={Emerald Publishing Limited}
}

@book{khalil2002nonlinear,
	title={Nonlinear systems},
	author={Khalil, Hassan K},
	volume={3},
	year={2002},
	publisher={Prentice hall Upper Saddle River, NJ}
}

@article{sahu2020decentralized,
	title={Decentralized zeroth-order constrained stochastic optimization algorithms: Frank--Wolfe and variants with applications to black-box adversarial attacks},
	author={Sahu, Anit Kumar and Kar, Soummya},
	journal={Proceedings of the IEEE},
	volume={108},
	number={11},
	pages={1890--1905},
	year={2020},
	publisher={IEEE}
}

@article{duchi2011dual,
	title={Dual averaging for distributed optimization: Convergence analysis and network scaling},
	author={Duchi, John C and Agarwal, Alekh and Wainwright, Martin J},
	journal={IEEE Transactions on Automatic control},
	volume={57},
	number={3},
	pages={592--606},
	year={2011},
	publisher={IEEE}
}

@article{lei2016primal,
	title={Primal--dual algorithm for distributed constrained optimization},
	author={Lei, Jinlong and Chen, Han-Fu and Fang, Hai-Tao},
	journal={Systems \& Control Letters},
	volume={96},
	pages={110--117},
	year={2016},
	publisher={Elsevier}
}

@article{yi2020distributed,
	title={Distributed online convex optimization with time-varying coupled inequality constraints},
	author={Yi, Xinlei and Li, Xiuxian and Xie, Lihua and Johansson, Karl H},
	journal={IEEE Transactions on Signal Processing},
	volume={68},
	pages={731--746},
	year={2020},
	publisher={IEEE}
}

@article{liang2019exponential,
	title={Exponential convergence of distributed primal--dual convex optimization algorithm without strong convexity},
	author={Liang, Shu and Yin, George and others},
	journal={Automatica},
	volume={105},
	pages={298--306},
	year={2019},
	publisher={Elsevier}
}

@book{li2017cooperative,
	title={Cooperative control of multi-agent systems: {A} consensus region approach},
	author={Li, Zhongkui and Duan, Zhisheng},
	year={2017},
	publisher={CRC press}
}

@book{sepulchre2012constructive,
	title={Constructive nonlinear control},
	author={Sepulchre, Rodolphe and Jankovic, Mrdjan and Kokotovic, Petar V},
	year={2012},
	publisher={Springer Science \& Business Media}
}

@article{lessard2022analysis,
	title={The analysis of optimization algorithms: A dissipativity approach},
	author={Lessard, Laurent},
	journal={IEEE Control Systems Magazine},
	volume={42},
	number={3},
	pages={58--72},
	year={2022},
	publisher={IEEE}
}

@article{welikala2025decentralized,
	title={Decentralized co-design of distributed controllers and communication topologies for vehicular platoons: {A} dissipativity-based approach},
	author={Welikala, Shirantha and Song, Zihao and Lin, Hai and Antsaklis, Panos J},
	journal={Automatica},
	volume={174},
	pages={112118},
	year={2025},
	publisher={Elsevier}
}

@article{li2025passivity,
	title={Passivity-based Gradient-Play Dynamics for Distributed Generalized Nash Equilibrium Seeking},
	author={Li, Weijian and Pavel, Lacra},
	journal={IEEE Transactions on Automatic Control},
	year={2025},
	publisher={IEEE}
}

@article{yamashita2020passivity,
	title={Passivity-based generalization of primal--dual dynamics for non-strictly convex cost functions},
	author={Yamashita, Shunya and Hatanaka, Takeshi and Yamauchi, Junya and Fujita, Masayuki},
	journal={Automatica},
	volume={112},
	pages={108712},
	year={2020},
	publisher={Elsevier}
}

@article{zakeri2022passivity,
	title={Passivity measures in cyberphysical systems design: An overview of recent results and applications},
	author={Zakeri, Hasan and Antsaklis, Panos J},
	journal={IEEE Control Systems Magazine},
	volume={42},
	number={2},
	pages={118--130},
	year={2022},
	publisher={IEEE}
}

@article{wen2004unifying,
	title={A unifying passivity framework for network flow control},
	author={Wen, John T and Arcak, Murat},
	journal={IEEE Transactions on Automatic Control},
	volume={49},
	number={2},
	pages={162--174},
	year={2004},
	publisher={IEEE}
}

@article{arcak2022compositional,
	title={Compositional design and verification of large-scale systems using dissipativity theory: Determining stability and performance from subsystem properties and interconnection structures},
	author={Arcak, Murat},
	journal={IEEE Control Systems Magazine},
	volume={42},
	number={2},
	pages={51--62},
	year={2022},
	publisher={IEEE}
}

@book{arrow1958studies,
	title={Studies in linear and non-linear programming},
	author={Arrow, Kenneth Joseph and Hurwicz, Leonid and Uzawa, Hirofumi and Chenery, Hollis Burnley and Johnson, Selmer and Karlin, Samuel},
	volume={2},
	year={1958},
	publisher={Stanford University Press Stanford}
}

@article{nedic2009distributed,
	title={Distributed subgradient methods for multi-agent optimization},
	author={Nedic, Angelia and Ozdaglar, Asuman},
	journal={IEEE Transactions on Automatic Control},
	volume={54},
	number={1},
	pages={48--61},
	year={2009},
	publisher={IEEE}
}

@article{yang2019survey,
	title={A survey of distributed optimization},
	author={Yang, Tao and Yi, Xinlei and Wu, Junfeng and Yuan, Ye and Wu, Di and Meng, Ziyang and Hong, Yiguang and Wang, Hong and Lin, Zongli and Johansson, Karl H},
	journal={Annual Reviews in Control},
	volume={47},
	pages={278--305},
	year={2019},
	publisher={Elsevier}
}

@article{nedic2018distributed,
	title={Distributed optimization for control},
	author={Nedi{\'c}, Angelia and Liu, Ji},
	journal={Annual Review of Control, Robotics, and Autonomous Systems},
	volume={1},
	number={1},
	pages={77--103},
	year={2018},
	publisher={Annual Reviews}
}

@article{li2017distributed,
	title={Distributed adaptive convex optimization on directed graphs via continuous-time algorithms},
	author={Li, Zhenhong and Ding, Zhengtao and Sun, Junyong and Li, Zhongkui},
	journal={IEEE Transactions on Automatic Control},
	volume={63},
	number={5},
	pages={1434--1441},
	year={2017},
	publisher={IEEE}
}

@article{pang2022gradient,
	title={A gradient-free distributed optimization method for convex sum of nonconvex cost functions},
	author={Pang, Yipeng and Hu, Guoqiang},
	journal={International Journal of Robust and Nonlinear Control},
	volume={32},
	number={14},
	pages={8086--8101},
	year={2022},
	publisher={Wiley Online Library}
}

@article{nozari2016differentially,
	title={Differentially private distributed convex optimization via functional perturbation},
	author={Nozari, Erfan and Tallapragada, Pavankumar and Cort{\'e}s, Jorge},
	journal={IEEE Transactions on Control of Network Systems},
	volume={5},
	number={1},
	pages={395--408},
	year={2016},
	publisher={IEEE}
}

@article{li2020projection,
	title={Projection-free distributed optimization with nonconvex local objective functions and resource allocation constraint},
	author={Li, Dewen and Li, Ning and Lewis, Frank},
	journal={IEEE Transactions on Control of Network Systems},
	volume={8},
	number={1},
	pages={413--422},
	year={2020},
	publisher={IEEE}
}

@article{kia2015distributed,
	title={Distributed convex optimization via continuous-time coordination algorithms with discrete-time communication},
	author={Kia, Solmaz S and Cort{\'e}s, Jorge and Mart{\'\i}nez, Sonia},
	journal={Automatica},
	volume={55},
	pages={254--264},
	year={2015},
	publisher={Elsevier}
}

@article{zeng2016distributed,
	title={Distributed continuous-time algorithm for constrained convex optimizations via nonsmooth analysis approach},
	author={Zeng, Xianlin and Yi, Peng and Hong, Yiguang},
	journal={IEEE Transactions on Automatic Control},
	volume={62},
	number={10},
	pages={5227--5233},
	year={2016},
	publisher={IEEE}
}

@article{li2020input,
	title={Input-feedforward-passivity-based distributed optimization over jointly connected balanced digraphs},
	author={Li, Mengmou and Chesi, Graziano and Hong, Yiguang},
	journal={IEEE Transactions on Automatic Control},
	volume={66},
	number={9},
	pages={4117--4131},
	year={2020},
	publisher={IEEE}
}

@article{gharesifard2013distributed,
	title={Distributed continuous-time convex optimization on weight-balanced digraphs},
	author={Gharesifard, Bahman and Cort{\'e}s, Jorge},
	journal={IEEE Transactions on Automatic Control},
	volume={59},
	number={3},
	pages={781--786},
	year={2013},
	publisher={IEEE}
}

@article{hatanaka2018passivity,
	title={Passivity-based distributed optimization with communication delays using PI consensus algorithm},
	author={Hatanaka, Takeshi and Chopra, Nikhil and Ishizaki, Takayuki and Li, Na},
	journal={IEEE Transactions on Automatic Control},
	volume={63},
	number={12},
	pages={4421--4428},
	year={2018},
	publisher={IEEE}
}

@inproceedings{wang2010control,
	title={Control approach to distributed optimization},
	author={Wang, Jing and Elia, Nicola},
	booktitle={2010 48th Annual Allerton Conference on Communication, Control, and Computing (Allerton)},
	pages={557--561},
	year={2010},
	organization={IEEE}
}

@article{cheng2022distributed,
	title={Distributed gradient tracking for unbalanced optimization with different constraint sets},
	author={Cheng, Songsong and Liang, Shu and Fan, Yuan and Hong, Yiguang},
	journal={IEEE Transactions on Automatic Control},
	volume={68},
	number={6},
	pages={3633--3640},
	year={2022},
	publisher={IEEE}
}

@article{li2020cooperative,
	title={Cooperative source seeking via networked multi-vehicle systems},
	author={Li, Zhuo and You, Keyou and Song, Shiji},
	journal={Automatica},
	volume={115},
	pages={108853},
	year={2020},
	publisher={Elsevier}
}

@article{zhang2022fully,
	title={Fully distributed algorithm for resource allocation over unbalanced directed networks without global Lipschitz condition},
	author={Zhang, Jin and Liu, Lu and Wang, Xinghu and Ji, Haibo},
	journal={IEEE Transactions on Automatic Control},
	volume={68},
	number={8},
	pages={5119--5126},
	year={2022},
	publisher={IEEE}
}

@article{davydov2025time,
	title={Time-varying convex optimization: A contraction and equilibrium tracking approach},
	author={Davydov, Alexander and Centorrino, Veronica and Gokhale, Anand and Russo, Giovanni and Bullo, Francesco},
	journal={IEEE Transactions on Automatic Control},
	year={2025},
	publisher={IEEE}
}

@article{gokhale2023contractivity,
	title={Contractivity of distributed optimization and {Nash} seeking dynamics},
	author={Gokhale, Anand and Davydov, Alexander and Bullo, Francesco},
	journal={IEEE Control Systems Letters},
	volume={7},
	pages={3896--3901},
	year={2023},
	publisher={IEEE}
}

\end{document}